\renewcommand{\epsilon}{\varepsilon}
\newcommand{\N}{\mathbb{N}}
\newcommand{\Z}{\mathbb{Z}}
\newcommand{\Q}{\mathbb{Q}}
\newcommand{\R}{\mathbb{R}}
\newcommand{\C}{\mathbb{C}}
\renewcommand{\P}{\mathbb{P}}
\newtheoremstyle{fancy}{}{}{\itshape}{}{\textbf\bgroup}{.\egroup}{ }{}
\newtheoremstyle{fancy2}{}{}{\rm}{}{\textbf\bgroup}{.\egroup}{ }{}
\theoremstyle{fancy}
\newtheorem{theorem}{Theorem}[section]
\newcounter{mtheorem}
\newtheorem{mtheorem}[mtheorem]{Theorem}
\newtheorem{mcor}[mtheorem]{Corollary}
\theoremstyle{fancy2}
\newtheorem{definition}[theorem]{Definition}
\newtheorem{remark}[theorem]{Remark}
\numberwithin{equation}{section}
\begin{document}
\title{Asymptotically conical Calabi-Yau manifolds, III}
\date{\today}
\author{Ronan J.~Conlon}
\address{D\'epartement de Math\'ematiques, Universit\'e du Qu\'ebec \`a Montr\'eal, Case Postale 8888, Succursale
Centre-ville, Montr\'eal (Qu\'ebec), H3C 3P8, Canada}
\email{rconlon@cirget.ca}
\author{Hans-Joachim Hein}
\address{Laboratoire de Math{\'e}matiques Jean Leray, Universit{\'e} de Nantes, 2 rue de la Houssini{\`e}re, 44322 Nantes Cedex 3, France}
\email{hansjoachim.hein@univ-nantes.fr}
\date{\today}
\begin{abstract}
In a recent preprint \cite{ChiLi}, Chi Li proved that asymptotically conical complex manifolds with regular tangent cone at infinity admit holomorphic compactifications (his result easily  extends to the quasiregular case). In this short note, we show that if the open manifold is Calabi-Yau, then Chi Li's compactification is projective algebraic. This has two applications. First, every Calabi-Yau manifold of this kind can be constructed using our refined Tian-Yau type theorem in \cite{Conlon3}. Secondly, we prove classification theorems for such manifolds via deformation to the normal cone.
This includes Kronheimer's classification of ALE spaces and a uniqueness theorem for Stenzel's metric.
\end{abstract}
\maketitle
\markboth{Ronan J.~Conlon and Hans-Joachim Hein}{Asymptotically conical Calabi-Yau manifolds, III}

\section{Introduction}

Let us begin by recalling what we mean by an asymptotically conical (AC) Calabi-Yau manifold.
See \cite[Section 1.3]{Conlon} for details of the following definition.

\begin{definition}\label{ACCY}
Let $(C,g_{0},\Omega_{0})$ be a Calabi-Yau cone with metric $g_{0}$ and holomorphic volume form $\Omega_{0}$. Let $(M,g,\Omega)$ be a Calabi-Yau manifold with metric $g$ and holomorphic volume form $\Omega$. We say that $(M,g,\Omega)$ is \emph{asymptotically conical {\rm (AC)} of rate $\lambda<0$ with asymptotic cone $C$} if there exists a diffeomorphism
$\Phi:C\setminus K\to M\setminus K'$ away from compact sets $K,K'$ such that for all $j \in \N_0$,
\begin{equation*}
|\nabla_{g_0}^j(\Phi^{*}g-g_{0})|_{g_{0}} + |\nabla_{g_0}^j(\Phi^{*}\Omega-\Omega_{0})|_{g_{0}} =O(r^{\lambda-j}).
\end{equation*}
Here, $r$ denotes the radius function of the cone metric $g_{0}$.
\end{definition}

Let us also recall the notion of a {quasiregular} Calabi-Yau cone. Let $D$ be a K\"ahler-Einstein Fano
orbifold, possibly with $\C$-codimension-$1$ singularities. Assume that the total space of the canonical orbibundle $K_D$ is smooth. Moreover, assume that $K_D$ is divisible by $k \in \N$ as a line orbibundle. By the Calabi ansatz, we can endow $C=(\frac{1}{k}K_{D})^{\times}$ (i.e.~the $k$-th root of $K_D$ with its zero section blown down) with the structure of a Calabi-Yau cone $(C, g_0, \Omega_0)$. We call a Calabi-Yau cone constructed in this manner \emph{quasiregular}, and \emph{regular} if $D$ is actually smooth.

Now suppose that $(M,g,\Omega)$ is AC Calabi-Yau with quasiregular asymptotic cone $(C,g_{0},\Omega_{0})$. By \cite[Lemma 2.14]{Conlon}, the complex structures $J$ on $M$ and $J_0$ on $C$ satisfy $|\nabla^{j}_{g_0}(\Phi^{*}J-J_{0})|_{g_{0}}=O(r^{\lambda-j})$ for all $j \in \N_0$.
A very recent result of Li \cite[Theorem 1.2]{ChiLi} (see Appendix \ref{s:chili}) then tells us that $M$ is biholomorphic to $X \setminus D$, where $X$ is a compact complex orbifold without divisorial singularities, containing $D$ as a complex suborbifold with positive normal orbibundle $N_D = -\frac{1}{k}K_D$.

\begin{mtheorem}\label{properties}
The complex orbifold $X = M \cup D$ satisfies the following properties.
\begin{enumerate}
\item[{\rm (i)}] We have that $-K_{X}=(k+1)[D]$ as complex line orbibundles on $X$.
\item[{\rm (ii)}] There exists a holomorphic map $p:X \to Y$ onto a normal projective variety $Y$ such that $p$ is an isomorphism onto its image in a neighbourhood of $D$,  all of the singularities of $Y \setminus p(D)$ are isolated and canonical, the restriction $p|_M: M \to Y \setminus p(D)$ is a crepant resolution of the singularities of $Y \setminus p(D)$, and the $\Q$-Cartier divisor $p_*[D]$ is ample on $Y$.
\item[{\rm (iii)}] $X$ is projective algebraic and satisfies $h^{i,0}(X)=0$ for all $i > 0$.
\item[{\rm (iv)}] Every K\"ahler form on $M$ is cohomologous to the restriction to $M$ of a K\"ahler form on $X$.
\end{enumerate}
\end{mtheorem}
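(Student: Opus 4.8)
The plan is to prove the four parts in the order (i), (ii), (iv), (iii); the last two are intertwined, since (iv) is what upgrades the Moishezon property produced by (ii) to genuine projectivity. For (i): the holomorphic volume form $\Omega$ is a nowhere-vanishing section of $K_M=K_X|_M$, and $\mathcal O_X(D)|_M$ is trivial, so $K_X\otimes\mathcal O_X((k+1)D)$ is trivial on $M$; by adjunction $\big(K_X\otimes\mathcal O_X((k+1)D)\big)|_D=K_D\otimes N_D^{-1}\otimes N_D^{k+1}=K_D\otimes N_D^{k}=\mathcal O_D$, using $N_D^{k}=K_D^{-1}$ (i.e.\ $N_D=-\tfrac1k K_D$), so it is trivial on $D$ as well. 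Near $D$ the manifold $M$ is biholomorphic to a punctured neighbourhood of the zero section in the total space of $N_D$, which by fibrewise inversion is identified with the large-$r$ region of the model cone $C=(\tfrac1k K_D)^{\times}$; under this identification $\Phi^{*}\Omega\to\Omega_{0}$ with all derivatives at rate $r^{\lambda}$. A direct computation with the Calabi ansatz (see \cite{Conlon}) shows that $\Omega_{0}$ extends across $D$ as a meromorphic volume form with a pole of order exactly $k+1$, i.e.\ as a nowhere-zero holomorphic section of $K_X\otimes\mathcal O_X((k+1)D)$ near $D$; since $|\Omega_{0}|_{g_{0}}$ is constant (the Calabi-Yau Monge--Amp\`ere equation), the decay forces $\Omega$ itself, read as a section of $K_X\otimes\mathcal O_X((k+1)D)$ over $M$, to be bounded and bounded away from zero near $D$, hence to extend by Riemann's theorem to a global nowhere-zero section. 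Therefore $K_X\otimes\mathcal O_X((k+1)D)\cong\mathcal O_X$, which is (i). (All of this is done in the orbifold category; $X$ is a manifold away from $D$.)

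For (ii): since $(M,g)$ is AC and $r^{2}$ is a K\"ahler potential on $C$, the function $\Phi^{*}r^{2}$ is a strictly plurisubharmonic exhaustion of $M$ outside a compact set (its complex Hessian being a decaying perturbation of the positive form $i\partial\bar\partial r^{2}$), so $M$ is $1$-convex. Its Remmert reduction $\rho\colon M\to M_{0}$ is a proper birational morphism onto a normal Stein space, an isomorphism outside a compact set---in particular near the $D$-end---contracting the maximal compact analytic subset $E\subset M$ to finitely many points. As $\omega_M=\mathcal O_M$, the negativity lemma forces $\rho$ to be crepant, and by Grauert--Riemenschneider $\omega_{M_{0}}=\rho_{*}\omega_{M}=\mathcal O_{M_{0}}$, so $M_{0}$ is Gorenstein with trivial canonical sheaf and, having the smooth crepant model $M$, canonical singularities. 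Re-glue $D$ onto $M_{0}$ along the common $D$-end to obtain a compact normal complex space $Y:=M_{0}\cup D$ with $Y\setminus D=M_{0}$ and $N_{D/Y}=N_D$ still positive. Since $M_{0}$ is Stein it has no positive-dimensional compact analytic subsets, and a deformation-to-the-normal-cone argument---the filtration of $\bigoplus_{m\ge0}H^{0}(Y,\mathcal O_Y(mD))$ by order of pole along $D$ degenerates into the coordinate ring $\bigoplus_{m\ge0}H^{0}(D,N_D^{m})$ of the affine cone over the polarized orbifold $(D,N_D)$, which is finitely generated---shows that this graded ring is finitely generated, that $\mathcal O_Y(D)$ is ample, and that $Y=\mathrm{Proj}$ of the ring is projective. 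Set $p\colon X\to Y$ equal to $\rho$ on $M$ and the identity on $D$; then $p$ is birational, an isomorphism near $D$, and $p^{*}(p_{*}[D])=[D]$ because $E$ is disjoint from $D$, whence $K_X=p^{*}K_Y$, the $\Q$-Cartier divisor $-K_Y=(k+1)p_{*}[D]$ is ample, the singularities of $Y\setminus p(D)=M_{0}$ are isolated and canonical, and $p|_M=\rho$ resolves them crepantly. This gives (ii). The main obstacle is precisely the projectivity of $Y$: the $1$-convexity and Remmert reduction are soft, but establishing that $M_{0}$ has canonical singularities with trivial canonical sheaf and that $\mathcal O_Y(D)$ is ample is where the positivity of $N_D$ and the absence of compact curves in $M_{0}$ genuinely enter, and this is the heart of the matter.

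For (iv) and (iii): from the long exact sequence of the pair $(X,M)$ and the Thom isomorphism $H^{k}(X,M)\cong H^{k-2}(D;\R)$, together with $H^{1}(D;\R)=0$ ($D$ being Fano, so $b_{1}(D)=0$), the restriction $H^{2}(X;\R)\to H^{2}(M;\R)$ is surjective. Hence a given K\"ahler form $\omega$ on $M$ satisfies $[\omega]=\iota^{*}\alpha$ for some $\alpha\in H^{2}(X;\R)$. Now $\mathcal O_X(D)=p^{*}\mathcal O_Y(p_{*}D)$ carries the pullback of a positively curved metric on the ample bundle $\mathcal O_Y(p_{*}D)$, so its curvature form $\Theta_{h}$ satisfies $\Theta_{h}\ge0$ on $X$ and $\Theta_{h}>0$ off $E$; in particular $\Theta_{h}>0$ on a neighbourhood of $D$. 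Choose a closed $2$-form $\Theta$ representing $\alpha$ that equals $\omega$ on the complement in $M$ of a small collar of $D$ (possible because there $\omega$ and any near-$D$ representative of $\alpha$ differ by an exact form, which can be cut off). Then for $s\gg0$ the form $\omega_{X}:=\Theta+s\,\Theta_{h}$ is a K\"ahler form on $X$: it is positive where $\Theta=\omega$ is (a neighbourhood of $E$ and most of $M$, where also $\Theta_{h}\ge0$), and on the collar and near $D$---compact regions avoiding $E$, where $\Theta_{h}>0$---it becomes positive once $s$ is large. Since $[\omega_{X}]=\alpha+s[D]$ and $\iota^{*}[D]=0$, we get $\iota^{*}[\omega_{X}]=[\omega]$, which proves (iv). In particular $X$ is a compact K\"ahler orbifold; being also Moishezon (bimeromorphic to the projective $Y$), it is projective. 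Finally $h^{i,0}(X)=h^{0,i}(X)=\dim H^{i}(X,\mathcal O_X)$ by orbifold Hodge theory, $H^{i}(X,\mathcal O_X)=H^{i}(Y,\mathcal O_Y)$ since $R^{>0}p_{*}\mathcal O_X=0$ ($X$ having quotient, hence rational, singularities), and $H^{i}(Y,\mathcal O_Y)=0$ for $i>0$ by Kawamata--Viehweg vanishing (valid since $Y$ has rational singularities and $-K_Y$ is ample); hence $h^{i,0}(X)=0$ for all $i>0$, which together with projectivity gives (iii).
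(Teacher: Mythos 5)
Part (i) of your proposal is correct and is essentially the paper's (one-sentence) argument: $\Omega$ extends meromorphically across $D$ with a pole of exact order $k+1$ and no other zeros or poles. The real trouble is in your proof of (iv), which is the crux of the theorem. You extend $[\omega]$ to a class $\alpha\in H^2(X;\R)$ purely topologically and then set $\omega_X=\Theta+s\,\Theta_h$, where $\Theta$ is ``a closed $2$-form representing $\alpha$ that equals $\omega$ off a collar of $D$''. This is not a K\"ahler form: on the collar $\Theta=\beta-d(\chi'\eta)$ has no reason to be of type $(1,1)$ (even if $\beta$ and $d\eta$ were $(1,1)$, the cut-off term $d\chi'\wedge\eta$ is not), and adding a large multiple of the $(1,1)$-form $\Theta_h$ cannot cancel a $(2,0)+(0,2)$ component. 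Repairing this requires exactly the two inputs the paper's proof is organised around: first, $X$ is Moishezon by (ii), hence satisfies the Hodge decomposition, and $h^{0,2}(X)=0$ by the vanishing in (ii), so that $H^2(X;\R)=H^{1,1}(X;\R)$ and the extension class has a closed $(1,1)$-representative $\xi$; second, the $i\partial\bar\partial$-lemma on the AC manifold $M$ (Corollary A.3(i) of the first paper in the series), which converts the exact $(1,1)$-form $\omega-\xi|_M$ into $i\partial\bar\partial u$ for a \emph{function} $u$, so that the cut-off $i\partial\bar\partial(\chi u)$ remains of type $(1,1)$. The paper then takes $\xi+i\partial\bar\partial(\chi u)+C\gamma$ with $\gamma$ the good curvature form of $[D]$. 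Note that this forces the vanishing $h^{0,i}(X)=0$ to be established \emph{before} the K\"ahler/projectivity statements --- the reverse of your ordering, in which the vanishing is deduced at the end (via Kawamata--Viehweg on the klt variety $Y$, which is a legitimate shortcut as a statement, but arrives too late to feed into the K\"ahler-form construction).

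There is also a gap in your proof of (ii). Your route (Remmert reduction of the $1$-convex manifold $M$, regluing $D$, then taking ${\rm Proj}$ of the section ring) differs from the paper's, which simply invokes Grauert's generalisation of the Kodaira embedding/contraction theorem applied to the linear system $|mD|$, $m\gg1$, as in Proposition 2.4 of the second paper in the series. Your finite-generation step is not complete: the associated graded of the pole filtration on $\bigoplus_m H^0(Y,\mathcal O_Y(mD))$ is only a graded \emph{subalgebra} of $\bigoplus_m H^0(D,N_D^m)$, and graded subalgebras of finitely generated algebras need not be finitely generated. What is needed is surjectivity of the restriction maps $H^0(Y,mD)\to H^0(D,N_D^m)$ for $m\gg1$, which is itself a vanishing statement ($H^1(Y,(m-1)D)=0$) requiring the positivity machinery you are trying to establish; Grauert's theorem packages all of this. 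By contrast, the crepancy and canonicity part of your (ii) (negativity lemma, triviality of $\omega_{M_0}$) is correct and is what the paper dismisses as ``clear by definition''.
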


\begin{mcor}
Every {\rm AC} Calabi-Yau manifold with quasiregular asymptotic cone can be obtained from the refined Tian-Yau construction of \cite[Theorem A]{Conlon3}. In particular, if the asymptotic cone
is actually regular, then the optimal value of the rate $\lambda$ is strictly less than $-1$ \cite[Corollary B]{Conlon3}.
\end{mcor}

\begin{remark}
We expect that the optimal value of the rate $\lambda$ is always less than $-1$ whenever the asymptotic cone is quasiregular. Notice also that, at least in the regular case, it essentially follows from \cite[Proposition 1.2 and Theorem 1.2]{ChiLi}, together with our work in \cite{Conlon}, that the optimal value of $\lambda$ is a rational number:~ ${-{\rm min}}\{2, \frac{n}{k}\ell\}$ if the K\"ahler class $\mathfrak{k} \not\in H^2_c(M)$  and
$-{\rm min}\{2n, \frac{n}{k}\ell\}$ if $\mathfrak{k} \in H^2_c(M)$, where $n$ denotes the complex dimension of $M$ and $\ell \in \N \cup \{\infty\}$ is the maximal integer such that $D$ can be $(\ell-1)$-comfortably embedded in a projective manifold $X$ satisfying the assumptions of \cite[Theorem A]{Conlon3}. This statement is rigorous if either $\frac{n}{k}\ell\leq 2n$ and $\mathfrak{k}\in H^2_c(M)$ (which is satisfied for the complete intersections considered in \cite[Section 5]{Conlon}, thereby resolving the conjecture posed there), or $\ell = \infty$ (in which case $M$ is a crepant resolution of $C$; see \cite[Section 4]{Conlon} and \cite[Section 3]{Conlon3}).
\end{remark}

We now explain how to use Theorem \ref{properties} to classify AC Calabi-Yau manifolds. Let $(M, g, \Omega)$ be an AC Calabi-Yau manifold as above. Then the $1$-parameter family $(M, t^2 g, t^{n}\Omega)$ ($t \in \R^+$) converges to $(C, g_0, \Omega_0)$
in the Gromov-Hausdorff sense as $t \to 0$. Very roughly speaking, we now wish to upgrade this metric family to an algebraic one, in order to be able to use that the algebraic deformations of the affine algebraic variety $C$ can in principle be classified. The following theorem achieves this; the Calabi-Yau property \ref{properties}(i) will play a decisive role in the proof (see Remark \ref{r:whatisneeded}).

\begin{mtheorem}\label{thm:deform}
In the setting of Theorem \ref{properties}, there exists a $\C^*$-equivariant deformation $\pi: \mathcal{M} \to \C$ of
the cone $\mathcal{M}_0 = \pi^{-1}(0) = C =  (\frac{1}{k}K_D)^\times$ endowed with its natural $\C^*$-action such that the general fibre of $\pi$ is isomorphic to $Y \setminus p(D)$. Moreover, the deformation $\pi$ has negative grading.
\end{mtheorem}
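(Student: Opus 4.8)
The plan is to construct $\mathcal{M}$ as the affine cone over $Y$ with respect to the ample $\Q$-Cartier divisor $p_*[D]$, and to recognise $C$ as the subcone cut out by the tautological section. First I would form the section ring $R=\bigoplus_{m\ge 0}H^0(X,\mathcal{O}_X(mD))$ of the line orbibundle $\mathcal{O}_X(D)$. Since $p$ is an isomorphism near $D$ one has $\mathcal{O}_X(D)=p^{*}\mathcal{O}_Y(p_*[D])$, and since $p$ is crepant and $Y$ has canonical (hence rational) singularities, $Rp_*\mathcal{O}_X=\mathcal{O}_Y$ and $R^{i}p_*\mathcal{O}_X(mD)=0$ for $i>0$ (away from $D$ the sheaf $\mathcal{O}_X(mD)$ is just $\mathcal{O}_X$, and near $D$ the map $p$ is an isomorphism). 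Hence $R=\bigoplus_{m\ge 0}H^0(Y,\mathcal{O}_Y(m\,p_*[D]))$, a finitely generated normal graded $\C$-algebra with $\operatorname{Proj}R=Y$. Writing $s\in R_1=H^0(X,\mathcal{O}_X(D))$ for the canonical section vanishing exactly along $D$, I put $\mathcal{M}=\operatorname{Spec}R$, let $\pi\colon\mathcal{M}\to\C=\operatorname{Spec}\C[s]$ be the resulting map, and take the $\C^{*}$-action defined by the grading; this is $\pi$-equivariant for the weight-one action on $\C$. Inverting $s$ shows that each fibre over $\C\setminus\{0\}$ is $\operatorname{Spec}\bigl(\bigcup_m H^0(Y,\mathcal{O}_Y(m\,p_*[D]))\bigr)=Y\setminus p(D)$, which is affine because $p_*[D]$ is ample.

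It remains to identify the central fibre $\mathcal{M}_0=\operatorname{Spec}(R/sR)$ with $C$. Under the isomorphism $R_m\xrightarrow{\sim}F_m:=H^0(Y,\mathcal{O}_Y(m\,p_*[D]))$ given by division by $s^{m}$, multiplication by $s$ becomes the inclusion $F_{m-1}\hookrightarrow F_m$ of the pole-order filtration on $A:=\mathcal{O}(Y\setminus p(D))$; hence $R/sR=\operatorname{gr}_{\operatorname{ord}_D}A$, which is a domain, so $\mathcal{M}_0$ is reduced and irreducible. The orbifold structure sequence $0\to\mathcal{O}_X((m-1)D)\to\mathcal{O}_X(mD)\to\iota_*(N_D^{\otimes m})\to 0$ on $X$ (with $\iota\colon D\hookrightarrow X$ and $N_D=\mathcal{O}_X(D)|_D=-\tfrac{1}{k}K_D$) then exhibits the degree-$m$ part of $R/sR$ as a subspace of $H^0(D,N_D^{\otimes m})$ whose cokernel injects into $H^1(X,\mathcal{O}_X((m-1)D))$. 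If these $H^1$'s vanish for all $m\ge 1$, then $R/sR\cong\bigoplus_{m\ge 0}H^0(D,N_D^{\otimes m})$, which is precisely the coordinate ring of $(\tfrac{1}{k}K_D)^{\times}=C$, and the grading restricts to the natural $\C^{*}$-action on $C$; thus $\mathcal{M}_0=\pi^{-1}(0)=C$.

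This vanishing is the one non-formal point of the argument, and it is where the Calabi-Yau property \ref{properties}(i) enters decisively. Because $-K_X=(k+1)[D]$, we can write $\mathcal{O}_X((m-1)D)=\mathcal{O}_X(K_X+(m+k)D)$, and $(m+k)D=p^{*}\bigl((m+k)\,p_*[D]\bigr)$ is the pullback under a birational morphism of an ample $\Q$-Cartier divisor, hence nef and big, for every $m\ge 1$. Kawamata-Viehweg vanishing on the projective orbifold $X$ (which has klt, indeed quotient, singularities) therefore gives $H^{i}(X,\mathcal{O}_X((m-1)D))=0$ for all $i>0$ and all $m\ge 1$, as needed. (Without the Calabi-Yau hypothesis only Serre vanishing is available, which identifies $\operatorname{Spec}(R/sR)$ merely with some conical degeneration of $Y\setminus p(D)$, not necessarily with $C$.) Finally, $\pi$ has negative grading: the filtration $F_m$ is multiplicative with $\operatorname{gr}$ recovering the cone $C$, and the lower-order corrections that distinguish $A$ from $\operatorname{gr}_{\operatorname{ord}_D}A$ strictly lower the $\operatorname{ord}_D$-degree, so the class of $\pi$ in the $\C^{*}$-equivariant deformation theory of $C$ lies in strictly negative weights. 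Once the vanishing is established, all the remaining assertions are routine.
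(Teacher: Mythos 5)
Your construction is essentially the paper's proof in algebraic form: $\operatorname{Spec}$ of the section ring $\bigoplus_{m}H^0(Y,L^m)$ fibred over $\operatorname{Spec}\C[s]$ is exactly the affine part of the deformation to the normal cone that the paper builds geometrically (blow up $\P^1\times Y$ along $\{0\}\times D$, contract the residual copy of $Y$ in the central fibre, and remove the divisor at infinity), and both arguments reduce the identification of the central fibre with $C$ to the surjectivity of the restriction maps $H^0(Y,L^m)\to H^0(D,N^m)$, proved via the vanishing $H^1(X,\mathcal{O}_X((m-1)D))=0$ which uses $-K_X=(k+1)[D]$ in a decisive way. The only real difference is cosmetic: you invoke Kawamata--Viehweg on the klt orbifold $X$ directly, where the paper runs the orbifold version of Riemenschneider's analytic vanishing theorem through the modification $\bar{X}\to X$ --- precisely the mechanism isolated in Remark \ref{r:whatisneeded}.
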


In particular, $\pi$ is a flat affine morphism of affine algebraic varieties, equivariant with respect to the usual $\C^*$-action on the base and a $\C^*$-action on the total space that restricts to the given
action on the central fibre. See \cite[Chapters 2 and 4]{Pinkham} for the meaning of ``grading'' in this context. 

\begin{mcor}[Kronheimer \cite{Kronheimer2}]\label{cor:Kron}
Every {\rm AC} Calabi-Yau surface is a Kronheimer {\rm ALE} space \cite{Kronheimer}.
\end{mcor}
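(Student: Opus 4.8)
The plan is to combine Theorems~\ref{properties} and~\ref{thm:deform} with the classical deformation theory of rational double points. The first step is to pin down the asymptotic cone. Since $M$ is a surface, $(C,g_0,\Omega_0)$ is a Ricci-flat Kähler cone of complex dimension $2$, so its link is a smooth $3$-dimensional Sasaki-Einstein manifold of positive scalar curvature. A $3$-dimensional Einstein manifold has constant sectional curvature (the Weyl tensor vanishes in dimension $3$), so the link is a spherical space form $S^3/\Gamma$ with $\Gamma\subset SO(4)$ acting freely; compatibility with the Sasakian structure forces $\Gamma\subset U(2)$, and the existence of the parallel holomorphic volume form $\Omega_0$ forces $\Gamma\subset SU(2)$. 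Hence $C\cong\C^2/\Gamma$ with $\Gamma\subset SU(2)$ finite, so $C$ is a rational double point and, in particular, a quasiregular Calabi-Yau cone; thus Theorems~\ref{properties} and~\ref{thm:deform} apply. (As a by-product, every AC Calabi-Yau surface is automatically ALE.)

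Next I would identify $M$ as a complex surface. By Theorem~\ref{properties}(ii), $p|_M\colon M\to Y\setminus p(D)$ is a crepant resolution of the affine surface $Y\setminus p(D)$, whose singularities are isolated and canonical, hence du Val; since a crepant resolution of a surface with du Val singularities is its minimal resolution, $M$ is the minimal resolution of $Y\setminus p(D)$. By Theorem~\ref{thm:deform}, $Y\setminus p(D)$ is the general fibre of a $\C^*$-equivariant, negatively graded deformation $\pi\colon\mathcal M\to\C$ of the affine variety $\mathcal M_0=C=\C^2/\Gamma$. Now $\C^2/\Gamma$ is an affine hypersurface with an isolated quasi-homogeneous singularity, so its deformation functor as an affine variety is finite-dimensional and coincides with that of the germ, and its semiuniversal deformation is the classical Brieskorn-Grothendieck family over $\mathfrak h/W$, where $\mathfrak h$ and $W$ are a Cartan subalgebra and the Weyl group of the ADE Lie algebra attached to $\Gamma$. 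By versality, $\pi$ is induced from this family by some morphism $\C\to\mathfrak h/W$, so $Y\setminus p(D)$ is isomorphic to a fibre of the semiuniversal deformation of $\C^2/\Gamma$, and hence $M$ is biholomorphic to the minimal resolution of such a fibre. By Brieskorn's simultaneous resolution together with Kronheimer's construction \cite{Kronheimer}, every minimal resolution of a fibre of the semiuniversal deformation of $\C^2/\Gamma$ underlies some Kronheimer ALE hyperkähler $4$-manifold asymptotic to $\C^2/\Gamma$: one takes the complex-structure parameter of the hyperkähler quotient to pick out the relevant point of $\mathfrak h/W$ and the Kähler parameter generic. Therefore $M$ is biholomorphic to the complex surface underlying a Kronheimer ALE space.

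Finally I would match the Riemannian metric. In real dimension $4$, Ricci-flat Kähler is equivalent to hyperkähler, so $(M,g)$, being asymptotic to the flat cone $\C^2/\Gamma$, is an ALE hyperkähler $4$-manifold. Having identified its complex structure with that of a Kronheimer space, one concludes either from the uniqueness of AC Calabi-Yau metrics in a fixed Kähler class on a fixed complex manifold \cite{Conlon}, combined with Kronheimer's existence theorem \cite{Kronheimer} (which realises an ALE hyperkähler metric on this complex surface in every admissible Kähler class, in particular in the Kähler class of $g$), or directly from Kronheimer's Torelli theorem \cite{Kronheimer2}. In either case $(M,g)$ is isometric to one of the ALE spaces of \cite{Kronheimer}.

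The step most in need of care is the passage from Theorem~\ref{thm:deform} to the semiuniversal deformation: one must check that the abstract $\C^*$-equivariant family there, which a priori is only a deformation of the affine variety $C$, is genuinely governed by the finite-dimensional germ (equivalently, affine) deformation theory of the rational double point — this is where the affineness of $\mathcal M$, and in the general quasiregular setting the negative grading, are used — and one must know that Kronheimer's hyperkähler quotients really do exhaust, as complex surfaces, the minimal resolutions of all fibres of this deformation. The remaining ingredients are either formal (versality, and the identification of fibres it yields) or classical.
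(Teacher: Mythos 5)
Your proposal is correct and follows essentially the same route as the paper: identify $C=\C^2/\Gamma$, use Theorem~\ref{thm:deform} to realise $Y\setminus p(D)$ as a fibre of the versal $\C^*$-deformation of the Kleinian singularity, identify $M$ as its (unique, minimal) crepant resolution, and conclude by existence and uniqueness of the ALE Ricci-flat K\"ahler metric in each K\"ahler class. The one delicate point you flag at the end---that ordinary versality only identifies the family after shrinking the total space, so one does not immediately get that the \emph{entire} affine surface $Y\setminus p(D)$ is a fibre of the Brieskorn--Grothendieck family---is exactly the issue the paper resolves in its footnote via Slodowy's $\C^*$-equivariant versality, so you should make that step explicit rather than leave it as a caveat.
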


\begin{proof} Our argument is close in spirit to Kronheimer's, but does not rely on twistor theory.

 The asymptotic cone of an AC Calabi-Yau surface takes the form $C = \C^2/\Gamma$ for some finite group $\Gamma \subset {\rm SU}(2)$ acting freely on $\mathbb{S}^3 \subset \C^2$. Using Theorem \ref{thm:deform}, we may conclude that the affine algebraic surface $Y \setminus p(D)$ is a member of the well-known versal $\C^*$-deformation of the Kleinian singularity $\C^2/\Gamma$ \cite[p.12, Remark 1]{Slodowy},\footnote{Standard results in deformation theory (see \cite[Example 4.5]{ArtinDef} in the algebraic category and \cite{Kas} or \cite[p.198, Theorem]{GrauertDef} in the analytic category) all require shrinking the total space $\mathcal{M}$ of the given deformation in order to be able to pull $\mathcal{M}$ back from the versal deformation. We can undo this shrinking here by using the existence of a $\C^*$-equivariant map to the versal $\C^*$-deformation in the analytic category \cite{Slodowy}. A very similar point appears in \cite[(2.5)]{Kronheimer2}.} so that $M$ is a crepant resolution of one of these deformations.

In turn, \emph{all} deformations of $\C^2/\Gamma$ have negative grading and admit a unique crepant resolution,\footnote{These are well-known special properties of the Kleinian singularities $\C^2/\Gamma$. In general, a Calabi-Yau cone may admit deformations of nonnegative grading and any number of crepant resolutions. See Remark \ref{contrarian} below.}
and every K\"ahler class on such a crepant resolution contains a unique ALE Ricci-flat K\"ahler metric by the results proved and reviewed in \cite{Conlon, Conlon3}, going back to the foundational article \cite{Tian}.
\end{proof}

\begin{mcor}\label{classification}
Let $D$ be a K\"ahler-Einstein Fano manifold. For $k \in \N$ dividing $c_1(D)$, let $M^n$
be an {\rm AC} Calabi-Yau manifold
with asymptotic cone $C = (\frac{1}{k}K_{D})^{\times}$ given by the Calabi ansatz.
\begin{enumerate}
\item[{\rm (i)}] If $D = \P^2$ and $k = 1$, then $M = K_D$ with Calabi's metric \cite{Cal1}.
\item[{\rm (ii)}] If $D = \P^1 \times \P^1$ and $k = 1$, then either $M = K_D$ with one of Goto's deformations of Calabi's metric \cite{goto}, or $M = \P^3\setminus {\rm quadric} =T^*\R\P^3$
with a $\Z_2$-quotient of Stenzel's metric \cite{Stenzel}.
\item[{\rm (iii)}] If $D=\mathbb{P}^{1}\times\mathbb{P}^{1}$ and $k=2$, then $C$ is the $3$-fold ordinary double point, and either $M = T^*\mathbb{S}^3$, the smoothing of $C$, with Stenzel's metric \cite{Stenzel}, or else $M = \mathcal{O}_{\P^1}(-1)^{\oplus 2}$, the small resolution of $C$, with Candelas-de la Ossa's metric \cite{delaossa}.
\item[{\rm (iv)}] If $D$ is a quadric in $\P^n$ with $n > 3$ and $k = n-1$, then $C$ is the $n$-fold ordinary double point, and $M = T^*\mathbb{S}^n$, the smoothing of $C$, with Stenzel's metric \cite{Stenzel}.
\end{enumerate}
\end{mcor}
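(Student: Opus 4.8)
The plan is to treat the four cases by a common three-step procedure: (a) identify the asymptotic cone $C$ explicitly as an affine variety with its Reeb $\C^*$-action; (b) use Theorem \ref{thm:deform} together with the deformation theory of $C$ to pin down the affine variety $Y\setminus p(D)$, which by Theorem \ref{properties}(ii) is the affinisation of $M$; and (c) combine the classification of crepant resolutions of $Y\setminus p(D)$ with the existence and uniqueness of asymptotically conical Ricci-flat K\"ahler metrics in a prescribed K\"ahler class, as established in \cite{Conlon, Conlon3} and, for surfaces, \cite{Tian}, in order to identify $(M,g,\Omega)$.

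For step (a): one has $C = (K_{\P^2})^\times = \C^3/\Z_3$ in case (i); $C = (K_{\P^1\times\P^1})^\times$, which is the affine cone over $(\P^1\times\P^1, \mathcal{O}(2,2))$ and hence the quotient of the $3$-fold ordinary double point $C_{\mathrm{con}} = \{xy - zw = 0\}\subset\C^4$ by the involution $(x,y,z,w)\mapsto(-x,-y,-z,-w)$, in case (ii); $C = (\frac{1}{2}K_{\P^1\times\P^1})^\times = C_{\mathrm{con}}$ in case (iii); and $C = (\frac{1}{n-1}K_D)^\times = \{z_0^2 + \dots + z_n^2 = 0\}\subset\C^{n+1}$, the $n$-fold ordinary double point, in case (iv).

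For step (b): the $n$-fold ordinary double point and $C_{\mathrm{con}}$ are hypersurface singularities with Tjurina number $1$, so $T^1_C = \C$ is spanned by the obvious smoothing; $\C^3/\Z_3$ is rigid; and for $C_{\mathrm{con}}/\Z_2$ one checks---for instance by a toric computation, or following Pinkham's analysis of deformations of cones \cite{Pinkham}---that the negatively graded part of $T^1_C$ is one-dimensional, again spanned by the smoothing. Since the deformation supplied by Theorem \ref{thm:deform} is $\C^*$-equivariant of negative grading, it follows that $Y\setminus p(D)$ is either $C$ itself or this smoothing: nothing but $C$ in case (i), $\{xy - zw = t\}\cong T^*\mathbb{S}^3$ in case (iii), $\{z_0^2 + \dots + z_n^2 = t\}\cong T^*\mathbb{S}^n$ in case (iv), and $\{xy - zw = t\}/\Z_2 \cong T^*\mathbb{S}^3/\Z_2 = T^*\R\P^3 = \P^3\setminus{\rm quadric}$ in case (ii). Verifying this deformation-theoretic input for $C_{\mathrm{con}}/\Z_2$ in case (ii)---that there are no further $\C^*$-equivariant negatively graded deformations besides the smoothing descending from $C_{\mathrm{con}}$---is the step I expect to be the main technical point.

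For step (c): if $Y\setminus p(D)$ is the smoothing it is already smooth, so $M = Y\setminus p(D)$, giving $T^*\mathbb{S}^n$, $T^*\mathbb{S}^3$, or $\P^3\setminus{\rm quadric}$; if instead $Y\setminus p(D) = C$, then $M$ is a crepant resolution of $C$, and a toric computation of the crepant resolutions shows that it equals $K_{\P^2}$ in case (i), $K_{\P^1\times\P^1}$ in case (ii), and $\mathcal{O}_{\P^1}(-1)^{\oplus 2}$ in case (iii), the last being unique only up to the conifold flop---whereas the $n$-fold ordinary double point with $n > 3$ is $\Q$-factorial and terminal and hence admits no crepant resolution, which is precisely why case (iv) produces only the smoothing. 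Finally, computing $H^2$ of each resulting $M$ (one-dimensional for $K_{\P^2}$ and $\mathcal{O}_{\P^1}(-1)^{\oplus 2}$, two-dimensional for $K_{\P^1\times\P^1}$, and zero for $T^*\mathbb{S}^n$ with $n\geq 3$ and for $\P^3\setminus{\rm quadric}$) and using that each K\"ahler class on such an $M$ supports a unique asymptotically conical Ricci-flat K\"ahler metric with cone $C$, I would identify these metrics with the classical ones: Calabi's metric \cite{Cal1} and its scalings on $K_{\P^2}$; Goto's family \cite{goto} on $K_{\P^1\times\P^1}$; Candelas--de la Ossa's metric \cite{delaossa} and its scalings on $\mathcal{O}_{\P^1}(-1)^{\oplus 2}$; and Stenzel's metric \cite{Stenzel}, respectively its $\Z_2$-quotient, on $T^*\mathbb{S}^n$ for $n\geq 3$, respectively on $\P^3\setminus{\rm quadric}$.
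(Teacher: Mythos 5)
Your proposal follows the same three-step strategy as the paper: use Theorem \ref{thm:deform} plus the deformation theory of $C$ to pin down the affine variety $Y\setminus p(D)$, then classify the crepant resolutions of $C$, then invoke existence and uniqueness of AC Ricci-flat K\"ahler metrics in each K\"ahler class. Your identifications of the cones, the smoothings, and the final list of manifolds and metrics all agree with the paper (your terminal-plus-$\Q$-factorial argument in case (iv) is a valid alternative to the paper's Picard-rank argument, and the paper additionally offers a second, independent route to your step (b) via Fujita's classification of del Pezzo $3$-folds applied to $(Y,p_*[D])$, which you do not mention).

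Two of your steps are, however, under-justified, and they are precisely the load-bearing ones. First, the deformation-theoretic input --- that $\C^3/\Z_3$ is rigid and that the cones in (ii), (iii), (iv) admit exactly one deformation, which is smooth --- is asserted rather than established; you correctly flag the quotient $C_{\rm con}/\Z_2$ as the delicate case but leave it open. The paper closes this by citing Schlessinger for (i), Altmann's toric computations for (i)--(iii), and Kas for (iii)--(iv). Second, and more seriously, your ``toric computation of the crepant resolutions'' a priori only classifies the \emph{toric} crepant resolutions; nothing in your argument rules out non-toric ones. The paper handles this with Mori's theorem that relative minimal models of a terminal $3$-fold are connected by flops, and then checks that $K_{\P^2}$ and $K_{\P^1\times\P^1}$ admit no flops (since $D$ contains no contractible curves) while the conifold's unique flop is isomorphic to $\mathcal{O}_{\P^1}(-1)^{\oplus 2}$. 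That this is a genuine issue, not a formality, is shown by Remark \ref{contrarian}: for other K\"ahler-Einstein del Pezzo surfaces $D$, the cone $K_D^\times$ has many crepant resolutions arising as flops of $K_D$, some with non-pure-dimensional exceptional sets, so an appeal to the ``obvious'' resolution plus a toric picture would give the wrong answer there. You need either Mori's flop theorem (plus the fact that flops over a toric base of a toric total space remain toric) or some substitute to make your step (c) exhaustive.
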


\begin{proof}
We begin by combining Theorem \ref{thm:deform} with the fact that the cone $C$ in (i) is rigid, whereas the ones in (ii), (iii), (iv) have exactly one deformation, which is smooth. (For (i) this follows from \cite{Schlessinger2}, for (i), (ii), (iii) from \cite{altmann}, and for (iii), (iv) from \cite{Kas}.) Alternatively, we can apply the classification theory of log-Fano varieties \cite[Definition 2.1.1]{AG5} to $Y$:~in (i) and (ii), $(Y,p_*[D])$ is a del Pezzo $3$-fold of degree $9$ and $8$ respectively---by \cite[Remark 3.2.6 and Theorem 3.3.1]{AG5}, the only possible examples are $(\P^3, \mathcal{O}(2))$ and projective cones; in (iii) and (iv), $Y$ must be a quadric by \cite[Theorem 3.1.14]{AG5}, and it is easy to see that a singular quadric with only isolated singularities is a projective cone.

It therefore remains to classify all possible crepant resolutions $M$ of $C$, or at least those carrying a K\"ahler form; in fact, we will use that $M$ is quasiprojective by Theorem \ref{properties}(iii), although this should not be necessary. For (i), (ii), (iii), we begin by observing that $C$ has an obvious crepant resolution $M_0$. By a result of Mori \cite[Theorem 3.5.1]{flops},\footnote{It seems possible that these cases can also be treated using the holomorphic isometries of $C$ instead of Mori theory.} it therefore suffices to classify all possible flops of $M_0$ \cite[Definition 2.2.1]{flops}. In (i) and (ii), $M_0 = K_D$ cannot be flopped because $D$ does not contain any contractible curves, as would be required by \cite[Definition 2.1.1.2]{flops}. In (iii), $M_0 = \mathcal{O}_{\P^1}(-1)^{\oplus 2}$. By \cite[Proposition 2.1.6]{flops}, this has a unique flop, which is isomorphic to $M_0$. Finally, regarding (iv), it is easy to see that $C$ is terminal, so that the blow-down morphism $M \to C$ would have to be small. But \cite[p.2879, footnote]{Conlon} shows that this is not possible because $D$ has Picard rank $1$.

The metric uniqueness statements are now clear by \cite[Theorem 3.1]{Conlon}. As usual, we are ignoring the fact that all of these metrics depend on one or two scaling or diffeomorphism parameters.
\end{proof}

\begin{remark}\label{contrarian}
If $D$ is a K\"ahler-Einstein del Pezzo surface other than $\P^2$ or $\P^1 \times \P^1$, then $k = 1$ and in general, the cone $K_D^\times$ admits many different smooth and singular deformations of negative grading (see for instance \cite[Example 1.4]{Conlon}, but see also \cite[Remark 5.3]{Conlon} for some examples of deformations of \emph{nonnegative} grading) and many different crepant resolutions (see \cite[Example 4.8]{Kollar}; in particular, flops of $K_D$ yield crepant resolutions of $K_D^\times$ whose exceptional set is not pure-dimensional).
\end{remark}

\subsection*{Acknowledgments} We thank Mark Haskins for many helpful discussions over the years, Chi Li for sending us a draft of his paper \cite{ChiLi} and showing us the example in Remark \ref{r:chili}, Richard Thomas for explaining \cite[Example 4.5]{ArtinDef} to us, and Jeff Viaclovsky for pointing out \cite[Lemma 4.1]{LeBrunMaskit}.

\section{Proof of Theorem \ref{properties}}\label{s:properties}

It is clear that $\Omega$ extends to a meromorphic volume form on $X$ with a $(k+1)$-st order pole along $D$ and no poles or zeros elsewhere. Thus, $-K_X = (k+1)[D]$. Given this, Theorem \ref{properties} is an obvious consequence of the following result, which also answers the questions raised in \cite[p.9]{Conlon3}.

\begin{theorem}\label{wh}
Let $X$ be a compact complex orbifold without $\C$-codimension-$1$ singularities. Let $D$ be a suborbifold divisor in $X$ such that $D$ contains the singularities of $X$ and such that the normal orbibundle to $D$ in $X$ is positive. Writing $M = X \setminus D$, the following properties hold.

\begin{enumerate}
\item[{\rm (i)}] There exists a holomorphic map $p:X \to Y$ onto a normal projective variety $Y$ such that $p$ is an isomorphism onto its image in a neighbourhood of $D$, all of the singularities of $Y \setminus p(D)$ are isolated, the restriction $p|_M: M \to Y \setminus p(D)$ is a resolution of singularities for $Y \setminus p(D)$, and the $\Q$-Cartier divisor $p_*[D]$ is ample on $Y$.

\item[{\rm (ii)}] If, in addition, $-K_X = q[D]$ for some $q \in \N$, then $h^{0,i}(X) = 0$ for all $i > 0$. Moreover, all of the singularities of $Y \setminus p(D)$ are canonical.

\item[{\rm (iii)}] If, in addition, $q > 1$ and $M$ admits a K\"ahler form, then $X$ is projective algebraic and every K\"ahler form on $M$ is cohomologous to the restriction to $M$ of a K\"ahler form on $X$.
\end{enumerate}
\end{theorem}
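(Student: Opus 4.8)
The plan is to prove all three parts at once by first showing that $M$ is holomorphically convex, then gluing its Remmert reduction to a neighbourhood of $D$ to produce $Y$, and finally upgrading this analytic picture to a projective-algebraic one using the positivity of the normal orbibundle $N_D$. For the first step, fix a Hermitian metric $h$ on the line orbibundle $\mathcal{O}_X(D)$ whose Chern curvature is positive on a neighbourhood $U$ of $D$ (possible since it is positive along $D$ and $U$ retracts onto $D$), let $s_D \in H^0(X, \mathcal{O}_X(D))$ be the defining section of $D$, and set $\psi = -\log\|s_D\|_h^2$ on $U \setminus D$. Then $\psi$ is plurisubharmonic, $i\partial\bar\partial\psi$ is strictly positive transverse to the fibres of $U \to D$, and $\psi$ exhausts the end of $M$ near $D$; adding a small multiple of $\|s_D\|_h^2$ makes the exhaustion strictly plurisubharmonic, so $M$ is $1$-convex. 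By Grauert's theorem $M$ is therefore holomorphically convex, with Remmert reduction $\sigma\colon M \to M_0$ onto a Stein space contracting the maximal compact analytic subset of $M$ to finitely many points. Since all orbifold singularities of $X$ lie on $D$, the space $M$ is a manifold, so $M_0$ has only isolated singularities and $\sigma$ is a resolution.

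Because $\sigma$ is an isomorphism near the end of $M$, we may glue $M_0$ to a tubular neighbourhood of $D$ in $X$ to form a compact complex space $Y$ with a proper bimeromorphic morphism $p\colon X \to Y$ equal to $\sigma$ on $M$ and to the identity near $D$; this yields part (i) apart from projectivity and ampleness. For those, note that $L := \mathcal{O}_X(D)$ is nef, since it restricts to the ample bundle $N_D$ on $D$ and to the trivial bundle on $M$, and big, since for $m \gg 0$ the sections of $L^{\otimes m}$ are holomorphic functions on the $n$-dimensional affine-type space $M_0$ with pole order at most $m$ along $p(D)$, so that $h^0(X, L^{\otimes m}) \sim c m^n$ with $c > 0$. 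The crucial point is that the section ring $R = \bigoplus_{m \ge 0} H^0(X, L^{\otimes m})$ is a finitely generated $\C$-algebra; this is exactly where the positivity of $N_D$ enters, because near $D$ the manifold $M$ is a punctured disc bundle over $D$, hence carries an approximate $\C^*$-action, so the growth filtration on the coordinate ring of $M_0$ is good and its Rees algebra is finitely generated. Then $Y = \operatorname{Proj} R$, the hyperplane bundle $\mathcal{O}_Y(1)$ is ample and is a positive root of $p_*[D]$, and $p$ is an isomorphism near $D$ because $N_D$ and its higher-order jets along $D$ are realised by global sections of $L^{\otimes m}$ for $m \gg 0$, using the vanishing of $H^1$ of sufficiently negative bundles on the formal neighbourhood of $D$ (valid since $D$ is projective with $N_D$ ample). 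The remaining assertions of (i) are immediate from the first step.

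For part (ii), when $q \ge 2$ adjunction gives $K_D = (K_X + D)|_D = N_D^{-(q-1)}$, so $D$ is Fano and the relevant Kodaira-type vanishing for twists of $N_D$ on $D$ is available. Restricting $K_X = \mathcal{O}_X(-qD)$ to $M$ shows $K_M$ is trivial, while $K_Y = p_* K_X = -q\,p_*[D]$ is $\Q$-Cartier and trivial near the singular points of $M_0 = Y \setminus p(D)$, so there $M_0$ is Gorenstein with trivial canonical bundle. Fixing nowhere-zero holomorphic $n$-forms $\theta$ on $M$ and $\eta$ on $M_0$ that agree near $D$, the pushforward $\sigma_*\theta$ extends across the singular points to a section of $\omega_{M_0}$, hence equals $v\eta$ for some $v \in \mathcal{O}(M_0)$; since $v = 1$ near $p(D)$ and $v$ extends holomorphically across $p(D)$ to a function on the compact space $Y$, the maximum principle forces $v \equiv 1$, so $\sigma^*\eta = \theta$ and $\sigma$ is crepant. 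Thus $M_0$ has canonical, hence rational, singularities, so $R^i p_*\mathcal{O}_X = 0$ for $i > 0$ and $H^i(X, \mathcal{O}_X) = H^i(Y, \mathcal{O}_Y)$. Moreover $-K_Y = q\,p_*[D]$ is ample, so $Y$ is a $\Q$-Fano variety with klt singularities, and Kawamata-Viehweg vanishing gives $H^i(Y, \mathcal{O}_Y) = 0$ for $i > 0$; hence $h^{0,i}(X) = 0$ for all $i > 0$.

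For part (iii), assume additionally $q > 1$ (so $H^1(D; \R) = 0$) and that $M$ carries a Kähler form $\omega_M$. The Gysin sequence for $D \hookrightarrow X$ gives a surjection $H^2(X; \R) \to H^2(M; \R)$, hence a class $\alpha \in H^2(X; \R)$ restricting to $[\omega_M]$, which by $h^{0,2}(X) = 0$ has a closed $(1,1)$-representative. On $U \setminus D$ one has $i\partial\bar\partial\psi > 0$, the class $\alpha|_U + t\,c_1(\mathcal{O}_X(D))$ is a Kähler class on $U$ for $t \gg 0$ (the added summand dies in $H^2(U \setminus D)$ and makes the total class positive because $N_D$ is ample), and $\omega_M$ differs from a Kähler representative of it by an $i\partial\bar\partial$-exact term on $U \setminus D$ (using $H^1(U \setminus D, \mathcal{O}) = 0$, which holds for $n \ge 3$; for $n = 2$ one simply invokes that Moishezon surfaces are projective). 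A standard cutoff argument at the level of potentials then glues $\omega_M$ on $M$ to a large multiple of the model form near $D$, producing a Kähler form on $X$ in the class $\alpha + t\,c_1(\mathcal{O}_X(D))$; hence $X$ is Kähler, and being bimeromorphic to the projective variety $Y$ it is Moishezon, so $X$ is projective algebraic, and restricting a Kähler representative of this class to $M$ shows that every Kähler form on $M$ is cohomologous to the restriction of a Kähler form on $X$. I expect the main obstacle to be the algebraicity step — that $M_0$ is affine algebraic, that $Y = \operatorname{Proj} R$ is projective, and that $p_*[D]$ is ample — which is precisely where one must convert the purely analytic Remmert reduction into algebraic data by exploiting the conical structure of the end of $M$ forced by the positivity of $N_D$, equivalently the finite generation of $R$; once $Y$ is known to be projective, parts (ii) and (iii) are comparatively routine, modulo the potential-gluing in (iii).
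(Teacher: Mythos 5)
Your overall architecture (a Grauert-type contraction for (i), crepancy plus a vanishing theorem for (ii), cut-off gluing of K\"ahler potentials for (iii)) is close to the paper's, and your route through (ii) --- crepant resolution $\Rightarrow$ canonical $\Rightarrow$ rational singularities $\Rightarrow$ $H^i(X,\mathcal{O}_X)=H^i(Y,\mathcal{O}_Y)$, then Kawamata--Viehweg vanishing on the klt Fano variety $Y$ --- is a legitimate alternative to the paper's direct use of Riemenschneider's analytic vanishing theorem on $X$ (the paper's orbifold argument also passes to $Y$ via $R^qf_*\mathcal{O}=0$; it deliberately avoids Kawamata--Viehweg on $Y$ only because it does not want to resolve the quotient singularities along $p(D)$). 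However, there are genuine gaps. The most serious one is the finite generation of the section ring $R$ in part (i), which you correctly identify as the crux but then dispatch with ``near $D$ the manifold carries an approximate $\C^*$-action, so the growth filtration is good and its Rees algebra is finitely generated.'' This is not an argument: there is no actual $\C^*$-action on a punctured neighbourhood of $D$ unless the deformation to the normal cone is trivial, and an ``approximate'' action does not yield finite generation. The correct tool, which the paper imports from \cite[Proposition 2.4]{Conlon3}, is Grauert's generalisation of the Kodaira embedding theorem \cite[p.343, Satz 2]{Grau:62}: positivity of $N_D$ produces a strictly pseudoconvex neighbourhood basis of $D$ and shows that $|mD|$ for $m\gg 1$ embeds a neighbourhood of $D$ and contracts the exceptional set of the Remmert reduction. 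Similarly, ``possible since it is positive along $D$ and $U$ retracts onto $D$'' does not justify the existence of a metric $h$ on $[D]$ whose curvature is positive in \emph{all} directions of $TX$ near $D$; this is a real (if standard) lemma, namely \cite[Lemma 2.3]{Conlon3}, and the retraction is irrelevant to it.

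The second gap is in (iii): you pass from a real class $\alpha\in H^2(X;\R)$ restricting to $[\omega_M]$ to a closed $(1,1)$-representative ``by $h^{0,2}(X)=0$.'' For a general compact complex manifold, vanishing of $h^{2,0}$ and $h^{0,2}$ does not give $H^2(X;\R)=H^{1,1}(X)$; one needs the Hodge decomposition, which holds here only because $X$ is Moishezon \cite[Theorem 2.2.18]{Ma1} --- and in the orbifold case even this requires constructing a projective modification $\bar{X}\to X$ by blow-ups away from $D$, which is where the paper spends the second half of its proof. More broadly, your argument is written entirely for manifolds: the orbifold versions of the Gysin sequence, of Hodge theory, and of the vanishing theorems (needed both for (ii) and for (iii)) are precisely the delicate part of the paper's proof and are absent from yours. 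Two smaller points: your $n=2$ fallback (``Moishezon surfaces are projective'') yields projectivity of $X$ but not the assertion that every K\"ahler class on $M$ extends to $X$; and the ``maximum principle on the compact space $Y$'' step in (ii) is both unnecessary (the identity theorem suffices, since $v=1$ on an open set) and unjustified as stated, since you have not shown that $v$ extends across $p(D)$.
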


Part (i) was already proved in \cite[Proposition 2.4]{Conlon3}. Parts (ii) and (iii) follow from \cite[Section 2.3]{Conlon3} under the additional assumption that $X$ is K\"ahler. Here, we will in particular establish that $X$ is \emph{necessarily} K\"ahler by going through the relevant proofs in \cite{Conlon3} and showing that they still work if we only assume that $X$ is complex. All of this strongly relies on the fact that $N_D$ is positive.\footnote{Thus, unlike the construction of $X$ as a complex orbifold in \cite{ChiLi} (compare Appendix \ref{s:chili}), our proof that $X$ is K\"ahler is quite different in spirit from the treatment of the asymptotically \emph{cylindrical} case in \cite{HHN}.}

\begin{proof}[Proof of Theorem \ref{wh} in the smooth case]

(i) This was already shown in the proof of \cite[Proposition 2.4]{Conlon3}, based on Grauert's generalisation of the Kodaira embedding theorem \cite[p.343, Satz 2]{Grau:62}.

(ii) It is clear from (i) that $X$ admits $\dim X$ algebraically independent meromorphic functions, so that $X$ is a Moishezon manifold. Using the Hermitian metric on $[D]$ constructed in \cite[Lemma 2.3]{Conlon3} and the vanishing theorems of \cite[Theorem 3 and Appendix]{Riemen2}, we find that
$$H^{0,i}(X) = H^i(X, \mathcal{O}_X) = H^i(X, K_X + q[D]) = 0$$
for all $i > 0$. Alternatively, we can use that $p$ is given by the linear system $|mD|$ for $m \gg 1$; pulling back the Fubini-Study metric then shows that $[D]$ is quasi-positive as defined in \cite[p.266]{GR}, so that the vanishing follows from \cite[Satz 2.1]{GR}. (Both versions of this argument require $X$ to be smooth as we need the sheaf of sections of $[D]$ to be locally free in order to have that $\cdot = \otimes$ in \cite{GR, Riemen2}.)

That the singularities of $Y \setminus p(D)$ are canonical is clear by definition.

(iii) By \cite[Theorem 2.2.18]{Ma1}, the Hodge theorem holds on $X$.
Thus, $H^{2}(X)=H^{1,1}(X)$ by (ii). As in \cite[Proposition 2.5]{Conlon3}, it follows that the restriction map $H^{1,1}(X)\to H^{2}(M)$ is surjective.

Let $\omega$ be any K\"ahler form on $M$. By what we have just said, we can write
$\omega =\xi+d\eta$ for some closed $(1,1)$-form $\xi$ on $X$ and some $1$-form $\eta$ on $M$. Since $d\eta$ is of type $(1,1)$, we may further write $d\eta=i\partial\bar{\partial}u$ for some $u\in C^{\infty}(M)$ by \cite[Corollary A.3(i)]{Conlon}. Now let $\gamma = -i\partial\bar\partial \log h_f$ denote the good curvature form provided by \cite[Lemma 2.3]{Conlon3}, and let $\chi \in C^\infty_0(M)$ be a fixed cut-off function with $d\chi$ supported in the tubular neighbourhood of $D$ where $\gamma > 0$. Then, for $C \gg 1$ sufficiently large,
$$\xi+i\partial\bar{\partial}(\chi u) + C \gamma$$ clearly defines a K\"ahler form on $X$ whose restriction to $M$ is cohomologous to $\omega$.

Now recall that a Moishezon manifold is K\"ahler if and only if it is projective \cite[Theorem 2.2.26]{Ma1}. Alternatively, we can use the fact that $h^{2,0}(X) = 0$ together with Kodaira embedding.
\end{proof}

\begin{proof}[Proof of Theorem \ref{wh} in the general orbifold case] Part (i) is proved for orbifolds in \cite{Conlon3}. If (ii) holds and if the orbifold de Rham cohomology of $X$ satisfies the Hodge decomposition theorem, then the above proof will go through without any changes (using Kodaira embedding in the last step).

Our tool for verifying (ii) and the Hodge decomposition is the existence of a proper modification $\pi: \bar{X} \to X$, given by successive blow-ups with smooth centres not intersecting $D$ at any stage, such that $\bar{X}$ is a projective algebraic orbifold; to find $\pi$, we follow the proof of \cite[Theorem 2.2.16]{Ma1} with $\varphi = p^{-1}$. If $\bar{D} = \pi^{-1}(D)$, then $\bar{X} \setminus \bar{D}$ is smooth, and $\pi$ is an isomorphism onto its image near $\bar{D}$.

Given the map $\pi$, the proof of \cite[Theorem 2.2.18]{Ma1} now goes through in  the orbifold case if one uses orbifold differential forms. This yields the required Hodge decomposition. Moreover, using the injectivity of $\pi^*$ from this proof, (ii) follows if we can show that $H^i(\bar{X}, \mathcal{O}_{\bar{X}}) = 0$ for $i > 0$.

For simplicity, let us write $f = p \circ \pi$. By GAGA, this is an algebraic map. Since $Y$ has canonical singularities away from $f(\bar{D})$ and $f$ is an isomorphism onto its image close to $\bar{D}$, \cite[Theorem 5.22]{KM} yields that $R^qf_*\mathcal{O}_{\bar{X}} = 0$ for $q > 0$. Thus, $H^i(\bar{X}, \mathcal{O}_{\bar{X}}) = H^i(Y, \mathcal{O}_Y)$ by the Leray spectral sequence. It therefore suffices to prove that the latter group vanishes for $i > 0$.\footnote{If the orbifold singularities of $Y$ along $p(D)$ are log-terminal, then this already follows from \cite[Theorem 2.70]{KM}. Our aim here is to prove a version of this theorem that does not require resolving these singularities.}

Assume for the moment that the statement of \cite[Corollary 2.68]{KM}\footnote{Notice that $\omega_Y \otimes M$ should be $\omega_Y \otimes L$ in \cite[Corollary 2.68]{KM}.} holds for $f$ and for the orbifold line bundle $L = M = q[\bar{D}]$, where we have set $a_i = 0$. (This would follow from the statement of \cite[Theorem 2.64]{KM} with $L = M = q[\bar{D}] + f^*H$ for $H$ sufficiently ample on $Y$.) Then $R^q f_*F = 0$ for all $q > 0$, where $F = K_{\bar{X}} + L$. Consequently, from the
Leray spectral sequence, $H^i(Y, f_*F) = H^i(\bar{X}, F)$ for all $i$. Now $F = \sum n_i E_i$ with $n_i \in \N_0$, where the $E_i$ are $f$-exceptional divisors, so that $f_*F = \mathcal{O}_Y$. Moreover, $H^i(\bar{X}, F) = 0$ $(i > 0)$ is exactly the statement of \cite[Theorem 2.64]{KM} with
$L = M = q[\bar{D}]$ there. In conclusion, it remains to show that $H^i(\bar{X}, K_{\bar{X}} + Q) = 0$ for all $i > 0$ for $Q$ an orbifold line bundle with nonnegative curvature on $\bar{X}$ and with strictly positive curvature on a nonempty open subset of $\bar{X}$. But this immediately follows from the proof of \cite[Theorem 6]{Riemen2}.\end{proof}

\section{Proof of Theorem \ref{thm:deform}}\label{s:proof-C}

We first state and discuss a somewhat more technical result: Theorem \ref{thm:deform-tech} below. The following definition, motivated by \cite{Conlon3, Tian} and by Section \ref{s:properties}, will be helpful for this.

\begin{definition}
Let $Y$ be a projective variety. A subset $D \subset Y$ is called an \emph{admissible divisor} if $D$ is a complex hypersurface of $Y$ such that there exists an open complex analytic neighbourhood $U$ of $D$ such that $U$ is a complex orbifold and $D$ is a complex suborbifold of $U$, containing all of the singularities of $U$. We then have an associated $\Q$-Cartier divisor or a $\Q$-line bundle $[D]$ on $Y$.
\end{definition}

Theorem \ref{thm:deform-tech}(i) is essentially a folklore construction based on ``deforming to the normal cone'' \cite{Fulton}; see e.g. \cite[Proposition 5.1]{Conlon} and \cite[p.3]{ChiLi}. We will present this construction carefully here in order to highlight the fact, stated in Theorem \ref{thm:deform-tech}(ii), that the central fibre may not be normal. Another caveat is that we will always ``work within the orbifold category on $U$''; thus, for instance, a \emph{section} of $[D]$ is a section in the usual sense on $Y \setminus D$, where $[D]$ is a genuine line bundle, and is given by invariant sections on the local uniformising charts of the orbifold structure on $U$.

\begin{theorem}\label{thm:deform-tech}
Let $Y$ be a projective variety. Let $D \subset Y$ be an admissible divisor whose associated $\Q$-line bundle $L$ is ample. Let $Y_0$ denote the normal projective variety obtained by contracting the 
$\infty$-section of the $\P^1$-orbibundle $\P(N\oplus \C)$, where $N$ is the normal orbibundle to $D$ in $Y$.
\begin{enumerate}
\item[{\rm (i)}] There exists a test configuration \cite{donaldson, ross-thomas} $\pi: (\mathcal{Y}, \mathcal{L}) \to \C$ with general fibre $(Y,L)$ such that 

$\bullet$ there exists a homeomorphism $F$ from $Y_0$ onto the central fibre $\mathcal{Y}_0$ such that if $v$ denotes the vertex of the projective cone $Y_0$, then $F|_{Y_0 \setminus \{v\}}$ is biholomorphic onto its image, and

$\bullet$ $\mathcal{L}$ is the $\Q$-line bundle associated with a $\C^*$-invariant admissible divisor intersecting the general fibre in $D \subset Y$ and the central fibre in $D = F(\P(0 \oplus \C))$.

\item[{\rm (ii)}] The map $F: Y_0 \to \mathcal{Y}_0$ is the normalisation morphism of $\mathcal{Y}_0$. It is biholomorphic if and only if the restriction map $H^0(Y, L^m) \to H^0(D, N^m)$ is surjective for every $m \in \N$.
\end{enumerate}
\end{theorem}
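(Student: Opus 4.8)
The plan is to realise $(\mathcal{Y},\mathcal{L})$ as the relative $\mathrm{Proj}$ of the extended Rees algebra of the filtration of the section ring of $(Y,L)$ by order of vanishing along $D$. Working in the orbifold category on $U$, write $R=\bigoplus_{m\geq 0}H^{0}(Y,L^{m})$ for the section ring (so that $Y=\mathrm{Proj}(R)$ after passing to a Veronese subalgebra), let $s_{D}\in R_{1}$ be the canonical orbi-section of $L=[D]$ with $\mathrm{div}(s_{D})=D$, and filter $R$ by $F^{j}R_{m}=\{s\in R_{m}:\mathrm{ord}_{D}(s)\geq j\}$. With $t$ the coordinate on $\C$, set $\mathcal{R}=R[t][s_{D}/t]=\sum_{j\in\Z}t^{-j}F^{j}R\subseteq R[t,t^{-1}]$. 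One checks that $\mathcal{R}$ is generated over $\C[t]$ by $R$ together with $s_{D}/t$ (using that division by $s_{D}^{j}$ identifies $F^{j}R_{m}$ with $R_{m-j}$), hence is Noetherian, and that it is $t$-torsion free, hence flat over $\C[t]$; put $\mathcal{Y}=\mathrm{Proj}_{\C}(\mathcal{R})$ and $\mathcal{L}=\mathcal{O}_{\mathcal{Y}}(1)$. The $\C^{*}$-action assigning weight $-1$ to $t$ and weight $0$ to $R$ makes $\pi$ equivariant and has the negative grading of Theorem~\ref{thm:deform}. Since $\mathcal{R}[t^{-1}]=R[t,t^{-1}]$, inverting $t$ recovers $(Y,L)\times\C^{*}$, so $\pi$ is a test configuration in the sense of \cite{donaldson,ross-thomas}. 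I would take $\mathcal{D}$ to be the divisor on $\mathcal{Y}$ cut out by $s_{D}/t$, viewed as a section of $\mathcal{L}$: by construction it is $\C^{*}$-invariant (an eigenvector), has associated $\Q$-line bundle $\mathcal{L}$, meets the general fibre in $D$, and --- once the orbifold charts near $D\subset Y$ are propagated through the construction --- is admissible.

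To analyse the central fibre, flatness gives $\mathcal{Y}_{0}=\mathrm{Proj}(\mathcal{R}/t\mathcal{R})=\mathrm{Proj}(\mathrm{gr}_{F}R)$, with the residual $\C^{*}$-action of weight $j$ on $\mathrm{gr}^{j}$. Dividing by $s_{D}^{j}$ and restricting to $D$ identifies $\mathrm{gr}^{j}R_{m}$ with the image $V_{m-j}$ of the restriction map $H^{0}(Y,L^{m-j})\to H^{0}(D,N^{m-j})$ (using $L|_{D}=[D]|_{D}=N$), and collecting these over all $j\geq 0$ yields a graded ring isomorphism $\mathrm{gr}_{F}R\cong R_{D}[u]$, where $R_{D}=\bigoplus_{i\geq 0}V_{i}$ is the image of the section ring of $(Y,L)$ inside the full section ring $S=\bigoplus_{i\geq 0}H^{0}(D,N^{i})$ of $(D,N)$, where $u$ has degree $1$, and where $u$ is the class of $s_{D}$. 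On the other hand, the projective cone obtained from $\P(N\oplus\C)$ by contracting the $\infty$-section is, by the standard description of cones over $(D,N)$ (its homogeneous ring being $\bigoplus_{k}H^{0}(\P(N\oplus\C),\mathcal{O}(k s_{0}))=S[u]$), exactly $Y_{0}=\mathrm{Proj}(S[u])$, with $\{u=0\}=\P(0\oplus\C)$ and the vertex $v$ corresponding to $S_{+}=0$. The inclusion $R_{D}[u]\hookrightarrow S[u]$ then induces the morphism $F:Y_{0}\to\mathcal{Y}_{0}$ of the statement, carrying $\P(0\oplus\C)$ onto $\{u=0\}\cap\mathcal{Y}_{0}=\mathcal{D}\cap\mathcal{Y}_{0}$ and $v$ onto the vertex of $\mathcal{Y}_{0}$; this completes (i).

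For (ii), everything reduces to the ring inclusion $R_{D}[u]\hookrightarrow S[u]$. By Serre vanishing applied to $0\to L^{i-1}\to L^{i}\to N^{i}\to 0$ one has $V_{i}=S_{i}$ for $i\gg 0$, so $S$ is a finite $R_{D}$-module with the same fraction field; and $S$ is integrally closed because it is the \emph{full} section ring of the normal variety $D$ (Serre's criterion: $R_{1}$ from normality of $D$, and $S_{2}$ because $H^{1}_{\mathfrak{m}}(S)=0$ tautologically). Hence $S[u]=S\otimes_{\C}\C[u]$ is the integral closure of $R_{D}[u]\cong\mathrm{gr}_{F}R$, and since normalisation commutes with $\mathrm{Proj}$, $F$ is the normalisation morphism of $\mathcal{Y}_{0}$; in particular $F$ is finite, birational and a homeomorphism. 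A direct localisation computation shows $(R_{D}[u])_{(f)}=(S[u])_{(f)}$ for $f\in S_{i_{0}}$ with $i_{0}\gg 0$, and such affine opens $D_{+}(f)$ cover $Y_{0}\setminus\{v\}$; thus $F$ is biholomorphic off $v$, and the non-normal locus of $\mathcal{Y}_{0}$ is precisely $F(v)$. Finally, $F$ is globally biholomorphic if and only if $R_{D}=S$: the ``if'' direction makes $\mathrm{gr}_{F}R=S[u]$ outright, while the ``only if'' direction follows by restricting the isomorphism to the chart $\{u\neq 0\}$, where it becomes $\mathrm{Spec}(S)\to\mathrm{Spec}(R_{D})$ induced by $R_{D}\hookrightarrow S$. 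Since $R_{D}=S$ is equivalent to $V_{m}=S_{m}$ for all $m$, i.e.\ to surjectivity of $H^{0}(Y,L^{m})\to H^{0}(D,N^{m})$ for all $m$, this is the stated criterion.

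The main obstacle I anticipate is not a single estimate but the systematic orbifold and $\Q$-line-bundle bookkeeping pervading the argument: making precise what it means to ``work within the orbifold category on $U$'', checking that the orbifold charts near $D\subset Y$ extend over the total space $\mathcal{Y}$ so that $\mathcal{D}$ is genuinely admissible, and verifying that the various $\mathrm{Proj}$'s and section rings behave as in the smooth case after passing to suitable Veronese subalgebras. The most delicate conceptual point is the claim that $\mathcal{Y}_{0}$ can fail to be normal \emph{only} at its vertex and that $F$ is biholomorphic precisely on $Y_{0}\setminus\{v\}$ --- this is exactly the non-normality phenomenon Theorem~\ref{thm:deform-tech}(ii) is designed to exhibit, and it rests on the localisation identity $(R_{D}[u])_{(f)}=(S[u])_{(f)}$ for $f$ of large degree together with the tautological normality of the full section ring $S$.
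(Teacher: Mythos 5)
Your proof is correct in substance but takes a genuinely different, more algebraic route than the paper. The paper constructs $\mathcal{Y}$ geometrically: it takes Fulton's deformation to the normal cone $\hat{\mathcal{Y}}=\mathrm{Bl}_{\{0\}\times D}(\P^1\times Y)$, observes that $\hat{\mathcal{Y}}$ is also the blowup of $\P(L\oplus\C)$ in $D\subset\P(0\oplus\C)$, and contracts the component $Y$ of the central fibre by Grauert's contraction criterion (this is where ampleness of $L$ enters); for (ii) it compares the coordinate rings of $\mathcal{Y}_0\setminus D$ and $Y_0\setminus D$ much as you do, but identifies $F$ as the normalisation by citing Rossi's theorem on normal modifications rather than by exhibiting the integral closure. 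Your Rees-algebra construction $\mathcal{Y}=\mathrm{Proj}_{\C[t]}\bigl(\sum_j t^{-j}F^jR\bigr)$ produces the same test configuration, and it buys several things at once: flatness is immediate from $t$-torsion-freeness, the negative grading needed for Theorem \ref{thm:deform} is visible in the weights, and the identification $\mathrm{gr}_FR\cong R_D[u]$ makes the normalisation $S[u]\supseteq R_D[u]$ and the surjectivity criterion completely explicit, with no appeal to Rossi. What the paper's route buys is a more painless treatment of the orbifold structure (the contraction is a local analytic operation near the component $Y$, away from $\mathcal{D}$) and a direct construction of the homeomorphism $F$.

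Two places deserve slightly more care than you give them, though neither is a genuine gap. First, the normality of $S=\bigoplus_iH^0(D,N^i)$: the appeal to Serre's criterion via ``$H^1_{\mathfrak{m}}(S)=0$ tautologically'' is not by itself a proof, since $S_2$ is a condition at all primes, not just the irrelevant one; the cleanest statement is that $\mathrm{Spec}(S)$ is the Grauert contraction of the zero section of $N^{-1}$, hence normal, which is exactly how the theorem defines $Y_0$. Second, the localisation identity $(R_D[u])_{(f)}=(S[u])_{(f)}$ for $f\in S_{i_0}=V_{i_0}$ does hold, but the verification is worth recording: given $g/f^k$ with $g=\sum_jg_ju^j$ and $g_j\in S_{ki_0-j}$, multiply by $f/f$ so that each coefficient $g_jf$ lands in degree $(k+1)i_0-j\ge i_0$, where $V=S$; this is what confines the failure of normality to the vertex.
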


\begin{remark}\label{r:chili}
The following example shows that the cone $Y_0$ and the central fibre $\mathcal{Y}_0$ need not be isomorphic.
Let $Y$ be a smooth Riemann surface and let $D$ be a point on $Y$. Then clearly $Y_0 = \P^1$. However, if $\mathcal{Y}_0 = \P^1$, then $Y = \P^1$ because the arithmetic genus is constant in flat families.
\end{remark}
 
\begin{proof}[Proof of Theorem \ref{thm:deform}, assuming Theorem \ref{thm:deform-tech}] We consider the variety $Y$ provided by Theorem \ref{properties}(ii). By abuse of notation, we will write $D$ instead of $p(D)$. Then we obtain the deformation $\pi: \mathcal{M} \to \C$ from the test configuration of Theorem \ref{thm:deform-tech} by removing the $\C^*$-invariant admissible divisor of \ref{thm:deform-tech}(i). By \ref{thm:deform-tech}(ii), all that remains to be checked is that the restriction maps $H^0(Y, L^m) \to H^0(D, N^m)$ are surjective for every $m \in \N$, in order to ensure that $\mathcal{M}_0$ is indeed biholomorphic to $C$.

Taking cohomology in the exact sequence $0 \to \mathcal{O}_Y((m-1)D) \to \mathcal{O}_Y(mD) \to \mathcal{O}_{D}(mD) \to 0$, we find that $H^0(\mathcal{O}_Y(mD)) \to H^0(\mathcal{O}_D(mD))$ will be surjective if $H^1(\mathcal{O}_Y((m-1)D)) = 0$. To see that this holds in our case, we simply revisit the second proof of Theorem \ref{wh}; indeed, the same method as in the last paragraph of that proof yields the desired vanishing here as well (substituting $\bar{X} = X$, $\bar{D} = D$, $L = M = -K_{\bar{X}} + (m-1)[\bar{D}]$, and $F = (m-1)[\bar{D}]$).
\end{proof}

\begin{remark}\label{r:whatisneeded}
The two key properties of the pair $(Y,D)$ of Theorem \ref{properties}(ii) that made this proof go through are that $Y \setminus D$ has at worst log-terminal singularities, and that $-K_Y + (m-1)[D]$ is big and nef for all $m \in \N$ (in some suitable orbifold sense). Ultimately these hold because of \ref{properties}(i).
\end{remark}

\begin{proof}[Proof of Theorem \ref{thm:deform-tech}] We begin by reviewing the construction that leads to (i). Let $\hat{\pi}: \hat{\mathcal{Y}} \to \P^1$ be the ``deformation to the normal cone'' \cite[Section 5.1]{Fulton} associated with the pair $(Y,D)$. Thus, $\hat{\mathcal{Y}}$ is the blowup of $\P^1 \times Y$ in $\{0\} \times D$, and $\hat{\pi}$ is the induced projection onto $\P^1$. Then:

$\bullet$ $\hat{\pi}$ is a flat projective morphism.

$\bullet$ All fibres of $\hat{\pi}$ except for the central one, $\hat{\mathcal{Y}}_0 = \hat{\pi}^{-1}(0) =Y \cup \mathbb{P}(N \oplus \C)$, are isomorphic to $Y$.

$\bullet$ The two components of $\hat{\mathcal{Y}}_0$ intersect along $D = \mathbb{P}(N \oplus 0)$.

$\bullet$ $\hat{\pi}$ is equivariant with respect to the natural $\C^*$-action on $\P^1$ and its lift to $\hat{\mathcal{Y}}$.

$\bullet$ The strict transform of $\P^1 \times D$ defines a $\C^*$-invariant admissible divisor $\hat{\mathcal{D}}$.

$\bullet$ $\hat{\mathcal{D}}$ intersects the general fibre in $D \subset Y$ and the central fibre in $D = \P(0 \oplus \C) \subset \P(N \oplus \C)$. 

In order to construct the desired test configuration $\pi: \mathcal{Y} \to \C$, in addition to removing the fibre $\hat{\mathcal{Y}}_\infty = \hat{\pi}^{-1}(\infty)$ from $\hat{\mathcal{Y}}$, we also need to contract the component $Y \subset \hat{\mathcal{Y}}_0$ to a point. This is a local process taking place in a small analytic neighbourhood of this component, and we use here that $L$ is
ample. To see that $Y$ can indeed be contracted, it is helpful to realise that $\hat{\mathcal{Y}}$ can also be written as the blowup of $\mathbb{P}(L \oplus \C)$ in $D \subset Y = \mathbb{P}(0 \oplus \C)$ (with exceptional divisor $\hat{\mathcal{D}} = \mathbb{P}(N \oplus N)$); then $Y \subset \hat{\mathcal{Y}}_0$ equals the preimage of the $\infty$-section $\mathbb{P}(L \oplus 0)$, which can be contracted precisely because $L$ is positive \cite[p.340, Satz 5]{Grau:62}. Part (i) of Theorem \ref{thm:deform-tech} is clear now. 

To prove Part (ii), it suffices to compare the coordinate rings of the two affine algebraic varieties $\mathcal{Y}_0 \setminus D$ and $Y_0 \setminus D$. By construction, $\mathcal{Y}_0 \setminus D$ is the image of $N^* = L^*|_D$ under the contraction map $L^* \to (L^*)^\times = {\rm Spec}\, \bigoplus_{m \in \N_0} H^0(Y, L^m)$, whereas $Y_0 \setminus D = {\rm Spec} \,\bigoplus_{m \in \N_0} H^0(D, N^m)$; see \cite[\S 8.8]{EGA2}. This yields a morphism $Y_0 \setminus D \to \mathcal{Y}_0 \setminus D$, which is an isomorphism if and only if the restriction map $H^0(Y,L^m) \to H^0(D, N^m)$ is surjective for every $m \in \N$. The underlying map of topological spaces is clearly equal to the homeomorphism $F$ of Part (i), which is a biholomorphism off $v$. Since $Y_0 \setminus D$ is normal, \cite[Theorem 6.6]{Rossi2} now tells us that $F$ must be the normalisation map of $\mathcal{Y}_0 \setminus D$. \end{proof}

\appendix

\section{Li's compactification theorem}\label{s:chili}

Let us begin by stating the part of \cite[Theorem 1.2]{ChiLi} that we need.

\begin{theorem}\label{t:chili}
Let $D$ be a compact complex manifold with a holomorphic line bundle $L$. Fix $\delta > 0$ and a Hermitian metric $h$ of positive curvature on $L$. Write $\omega_0 = \frac{i}{2}\partial\bar\partial h^{-\delta}$ for the associated Calabi cone metric with radius function $r$ given by $r^2 = h^{-\delta}$ on $L\setminus 0$. Let $U$ be a tubular neighbourhood of the zero section of $L$, and let $J$ be a complex structure on $U \setminus 0$ such that
\begin{equation}\label{e:chili}|\nabla_{g_0}^j (J - J_0)|_{g_0} = O(r^{\lambda-j})\end{equation}
for some $\lambda < 0$ and all $j \in \N_0$, where $J_0$ denotes the usual complex structure of the total space of $L$. Then, modulo diffeomorphism, $J$ extends to a smooth complex structure on $U$.
\end{theorem}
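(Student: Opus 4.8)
The plan is to realise $(U\setminus 0,J)$ as a punctured neighbourhood of a smooth divisor inside a complex manifold, by producing enough $J$-holomorphic functions of prescribed growth and using them as coordinates near that divisor; this is in the spirit of the weighted cone analysis of \cite{Conlon}. First, note that $r\to\infty$ as one approaches the zero section of $L$, so \eqref{e:chili} says that there $J$ is a \emph{small} integrable perturbation of $J_0$, with all covariant derivatives small in the cone geometry. The region $\{r>R\}\subset U\setminus 0$ is, as a K\"ahler manifold, a neighbourhood of infinity in the affine Calabi cone $C$ over $(D,L)$, whose coordinate ring is $\bigoplus_{m\ge 0}H^0(D,L^m)$; adding the zero section of $L$ back in corresponds to attaching to $C$, at infinity, the divisor $D$, which sits as a smooth hypersurface with positive normal bundle $L$ in the projective completion $\bar C$ of $C$. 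The task therefore becomes: show that the complex manifold $(\{r>R\},J)$, carrying a complex structure $J=J_0+O(r^\lambda)$, is biholomorphic to a punctured neighbourhood of a smooth divisor in a complex manifold, asymptotically to the model $\bar C\supset D$.

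Next I would produce $J$-holomorphic functions. Fix homogeneous generators $s_1,\dots,s_N$ of $\bigoplus_{m\ge 0}H^0(D,L^m)$ and regard them as $J_0$-holomorphic functions $f_1,\dots,f_N$ on $C\setminus\{o\}$ with $|f_i|\sim r^{d_i}$. For each $i$ I would solve $\bar\partial_J\tilde f_i=0$ with $\tilde f_i=f_i+u_i$, $u_i=O(r^{d_i+\lambda})$, by iteration: the initial defect $\bar\partial_J f_i=(\bar\partial_J-\bar\partial_0)f_i$ is of order $r^{d_i+\lambda}$ (one derivative on $f_i$ times the $O(r^\lambda)$ discrepancy of the two complex structures) in the relevant weighted norm; one inverts $\bar\partial_0$ on the cone in weighted Hölder or Sobolev spaces — via a weighted H\"ormander $L^2$-estimate with weight a suitable multiple of the K\"ahler potential $r^2$, followed by elliptic bootstrapping — gaining one power of $r$, and corrects; the new defect is of order $r^{d_i+2\lambda}$, and so on, the corrections decaying geometrically because $\lambda<0$. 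This needs the weights $d_i+k\lambda$ to avoid the discrete set of indicial roots of $\bar\partial_0$ on the cone, which can be arranged by an arbitrarily small change of weight, the finitely many exceptional degrees being absorbed into a polynomial $J_0$-holomorphic correction.

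The resulting map $\Phi=(\tilde f_1,\dots,\tilde f_N)$ is $J$-holomorphic on $\{r>R'\}$ and $C^\infty$-asymptotic, in the cone geometry, to the defining embedding of $C$; hence it is a biholomorphism onto a neighbourhood of infinity in an irreducible affine variety $C_J\subset\C^N$, smooth there since the $\tilde f_i$ separate points and immerse, and with associated graded ring of polynomial-growth functions equal to $\bigoplus_{m\ge 0}H^0(D,L^m)$. Passing to the projective closure, $\bar C_J$ is a complex manifold near its divisor at infinity $D'$, with $D'\cong D$ and normal bundle $L$; so $\Phi$ exhibits $(\{r>R'\},J)$ as a punctured neighbourhood of the smooth divisor $D'$ in the complex manifold $\bar C_J$. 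Transporting the complex structure of $\bar C_J$ near $D'$ back through $\Phi$ and gluing extends $J$ over the zero section. The extension is an honest smooth complex structure on the abstract manifold $\bar C_J$; the qualifier ``modulo diffeomorphism'' is forced only because $\Phi$, being $\mathrm{id}+O(r^\lambda)$ relative to the model, extends across the zero section only as a homeomorphism (a diffeomorphism away from it), since $O(r^\lambda)$ becomes a term of non-integer order $-\lambda\delta$ in coordinates transverse to $D$; one removes these non-smooth terms of $J-J_0$ one order at a time by further such homeomorphisms, the orders tending to $\infty$, so that in the limit $J$ is carried by a homeomorphism of $U$, smooth off the zero section, onto a genuinely smooth complex structure extending over it.

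The main obstacle is the weighted mapping theory of $\bar\partial_0$ on the Calabi cone underlying the perturbation step — the Fredholm/indicial-root analysis together with the H\"ormander-type $L^2$-estimate producing solutions with sharp polynomial bounds, uniformly over the band of weights one needs — and, at a more conceptual level, the regularity bookkeeping in the last step: turning ``$J$ asymptotic to $J_0$ in the degenerating cone metric'' into genuine \emph{smoothness} of the extended complex structure transverse to $D$, while being honest that the implied self-map of $U$ need not be smooth where the cone metric blows up. Throughout it is the positivity of $N_D=L$ that makes the contraction in the $\bar\partial$-step and the successive corrections in the extension step converge.
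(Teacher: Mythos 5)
Your route is genuinely different from the one in the appendix, and it is global where the actual proof is local. The paper does not touch $H^0(D,L^m)$ at all: it fixes a coordinate ball $B\subset D$, trivialises $L|_B$, compares the Calabi cone metric $g_0$ with the non-K\"ahler Hermitian cone metric $g_\sharp=ds^2+s^2d\theta^2+s^2g_D$ (the key point being that $g_0$ and $g_\sharp$ share the scaling vector field $r\partial_r=s\partial_s$, so that \cite[Lemma 1.6]{Conlon} gives mutual control of all covariant derivatives), and then conformally rescales $s=e^t$ to land in the asymptotically cylindrical setting, where \cite[Theorem 3.1]{HHN} --- ultimately Newlander--Nirenberg with estimates \cite{NN, NW} --- produces the holomorphic coordinates near each point of the divisor. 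Li's own argument in \cite{ChiLi} is likewise local (weighted H\"older analysis relative to a smooth reference metric on $\Delta\times B$). Your global scheme, by contrast, is in the spirit of the algebraicity-of-tangent-cone literature; if it could be carried out it would give more (a canonical affine model of the end), but it is correspondingly harder, and the theorem as stated needs none of it.

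There are two genuine gaps. First, in the perturbation step: the iteration $\bar\partial_0 u_{k+1}=-(\bar\partial_J-\bar\partial_0)(f_i+u_k)$ requires solving $\bar\partial_0 v=\beta$ for data $\beta$ that is \emph{not} $\bar\partial_0$-closed (it is only approximately closed, via the integrability of $J$), and it must be solved on the annular end $\{r>R\}$, which is pseudoconcave at its inner boundary; a weighted H\"ormander estimate does not apply off the shelf there. This is repairable (cut off, correct, track the extra error terms), but it is not the routine step you present it as. Second, and more seriously, the passage from ``$\Phi=(\tilde f_1,\dots,\tilde f_N)$ is $C^\infty$-asymptotic to the model embedding'' to ``the closure $\bar C_J$ is a complex \emph{manifold} near its divisor at infinity, with $D'\cong D$ and normal bundle $L$'' is asserted, not proved, and it is essentially the conclusion of Theorem~\ref{t:chili} restated in different language: set-theoretically the closure adds the right boundary, but smoothness of the closure along $D'$ (equivalently, that suitable ratios of the $\tilde f_i$ extend to local coordinates across the divisor) is exactly the extension problem. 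Invoking Riemann extension for those ratios presupposes that the boundary is already a subvariety of a complex space containing the closure, which is circular. Your final paragraph, explaining why the identification with $U$ is only a homeomorphism (the error $O(r^\lambda)=O(|z|^{-\delta\lambda})$ is of non-integer order transverse to the zero section), is a correct and worthwhile observation and matches the reason for the ``modulo diffeomorphism'' caveat in \cite{HHN} and \cite{ChiLi}.
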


This result is analogous to \cite[Theorem 3.1]{HHN}, but Li's proof is different from the proof in \cite{HHN}. The aim of this appendix is to clarify the relation between the conical and the cylindrical case.

Fix a small coordinate ball $B \subset D$ and a trivialisation $L|_B \to B \times \C$. Push the set-up forward under this trivialisation, so that $J_0$ now also denotes the product complex structure on $B \times \C$. Let $z$ denote the fibre coordinate and set $\theta = \arg z $. Then
$$g_0 = dr^2 + r^2(d\theta + A)^2 + r^2 g_D,$$
where, up to rescaling, $A$ and $\omega_D$ are the connection and curvature forms of the Chern connection of $(L, h)$ in our chosen trivialisation; more precisely, if we write $h = e^{-\phi}|z|^2$ with $\phi: B \to \R$, then $A = \frac{\delta}{2} d^c\phi$ and $\omega_D = \frac{\delta}{4}dd^c \phi$ with respect to $J_0$. We now define a new Riemannian metric $g_{\sharp}$ by
$$g_\sharp = ds^2 + s^2d\theta^2 + s^2g_D,$$ where $s^2 = |z|^{-2\delta} = e^{-\delta \phi} r^2$.
This is a Riemannian cone metric on $B \times \C$ which is Hermitian with respect to $J_0$, but not K\"ahler. The key point is that $g_0$ and $g_\sharp$ have exactly the same scaling vector field $r\partial_r = s\partial_s$. Thus, by \cite[Lemma 1.6]{Conlon}, $|\nabla_{g_0}^j(g_0 - g_\sharp)|_{g_0} = O(r^{-j})$ and $|\nabla_{g_\sharp}^j(g_0 - g_\sharp)|_{g_\sharp} = O(s^{-j})$ for all $j \in \N_0$. Using this, it is easy to see that
\begin{equation}\label{type2def}
|\nabla_{g_\sharp}^j(J - J_0)|_{g_\sharp} = O(s^{\lambda-j}).
\end{equation}
Next observe that the pullback of $s^{-2}g_\sharp$ under $s = e^t$ is the cylinder metric $g_{\infty} = dt^2 + d\theta^2 + g_D$ on $\R^+ \times \mathbb{S}^1 \times B$, and that the pullback
of $J_0$ is the obvious product complex structure $J_\infty$. Identifying $J$ with its pullback, it then follows from (\ref{type2def}) that $|\nabla^j_{g_{\infty}}(J - J_\infty)|_{g_{\infty}} = O(e^{\lambda t})$. We are now able to construct the desired extension of $J$ by appealing to (an obvious localised version of) \cite[Theorem 3.1]{HHN}, whose proof reduces the problem to the classical results of \cite{NN, NW}.

Up until (\ref{type2def}), this argument is a slight modification of Li's approach; Li instead rewrites (\ref{e:chili}) as a comparison of $J$ and $J_0$ with respect to the smooth metric $\tilde{g}_0 = d\rho^2 + \rho^2 d\theta^2 + g_D$ on $\Delta \times B$, where $\Delta$ denotes the unit disk in $\C$ and $\rho = |z|$ (see \cite[(25)]{ChiLi}). He then constructs $J$-holomorphic coordinates by redoing the analysis of \cite{NN, NW} in weighted H\"older spaces with respect to $\tilde{g}_0$.

Finally, we point out that both arguments obviously extend to the orbifold setting. Alternatively, in order to prove Corollary \ref{cor:Kron}, we could also use a compactification theorem from \cite{LeBrunMaskit}, which relies on twistor theory. In fact, \cite[Lemma 4.1]{LeBrunMaskit} asserts that 
if the cone $(L\setminus 0, \omega_0)$ is flat of dimension $2$ (hence is of the form $\C^2/\Gamma$ for some finite group $\Gamma \subset {\rm U}(2)$ acting freely on $\mathbb{S}^3 \subset \C^2$), and if $J$ is the parallel complex structure of a \emph{scalar-flat} ALE K\"ahler metric of rate $\lambda \leq -\frac{3}{2}$, then $(U \setminus 0, J)$ admits an orbifold compactification obtained by adding on the orbifold curve $D = \mathbb{P}^1/\Gamma$. 

\bibliographystyle{amsplain}
\bibliography{ref}
\end{document}